\documentclass[12pt,reqno]{amsart}


\usepackage[T1]{fontenc}

\usepackage{mathtools}
\usepackage{amsmath}								
\usepackage{amssymb}
\usepackage{amsthm}
\usepackage{amscd}
\usepackage{amsfonts}
\usepackage{stmaryrd}
\usepackage{algorithm, algorithmic}
\usepackage{ wasysym }
\usepackage{listings}

\usepackage{fullpage}
\usepackage[shortlabels]{enumitem}

\usepackage[colorlinks, linktocpage, citecolor = blue, linkcolor = blue]{hyperref}
\usepackage{color}

\linespread{1.1}

\DeclareMathOperator{\Cay}{Cay}

\newtheorem{theorem}{Theorem}[section]

\newtheorem{proposition}[theorem]{Proposition}

\theoremstyle{definition}

\newtheorem{example}[theorem]{Example}

\makeatletter
\@namedef{subjclassname@2020}{%
  \textup{2020} Mathematics Subject Classification}
\makeatother

\title{Real symmetric matrices with partitioned eigenvalues}
\author{Madeleine Weinstein}

\keywords {real algebraic variety, real symmetric matrices, Euclidean distance degree, degenerate matrices}
\subjclass[2020]{14Q30, 14L24, 15B99}

\begin{document}

\begin{abstract}
We study the real algebraic variety of real symmetric matrices with eigenvalue multiplicities determined by a partition. We present formulas for the dimension and Euclidean distance degree. We give a parametrization by rational functions. For small matrices, we provide equations; for larger matrices, we explain how to use representation theory to find equations. We describe the ring of invariants under the action of the orthogonal group. For the subvariety of diagonal matrices, we give the degree. 
\end{abstract}

\maketitle

\section{Introduction}\label{sec:introduction}

Let $\lambda=(\lambda_1, \dots, \lambda_m)$ be a partition of $n$. Let $\mathbb{R}^{\frac{n(n+1)}{2}}$ be the space of real symmetric $n \times n$ matrices. We define the \textbf{variety of $\lambda$-partitioned eigenvalues} $V_{\mathbb{R}}(\lambda) \subset \mathbb{R}^{\frac{n(n+1)}{2}}$ to be the Zariski closure of the locus of matrices with eigenvalue multiplicities determined by $\lambda$. Since we take the Zariski closure, these varieties include all matrices with eigenvalue multiplicities determined by partitions of $n$ that are coarser than $\lambda$. 

The space of real symmetric matrices has multiple advantages over the spaces of real square matrices, complex square matrices, and complex symmetric matrices. Unlike other real matrices, real symmetric matrices have all real eigenvalues. Additionally, the real symmetric case has better properties with respect to diagonalizability than complex square or complex symmetric matrices.

We illustrate these properties with the example of $n=2,\lambda=(2)$, the locus of $2 \times 2$ matrices with coinciding eigenvalues. 
\begin{example}
Complex $2 \times 2$ matrices with the repeated eigenvalue $\mu$ can have two Jordan normal forms. The first is diagonal and the second is not. For convenience, call a $2 \times 2$ matrix with coinciding eigenvalues type A if its Jordan normal form (JNF) is diagonal and type B otherwise: 
\[ 
\text{JNF of a Type A matrix}: \begin{pmatrix}
\mu & 0 \\
0 &  \mu
\end{pmatrix}
\hspace{1cm}
\text{JNF of a Type B matrix}: \begin{pmatrix}
\mu & 1 \\
0 &  \mu
\end{pmatrix}. 
 \] 

We examine the dimensions of the loci of type A and type B matrices in three cases: complex square, complex symmetric, and real symmetric. In each case, the dimension of the locus of type A matrices is $1$ because scalar matrices are fixed by conjugation. For complex square matrices, the dimension of the Type B locus is 3. For complex symmetric matrices, the dimension of the Type B locus is 2. Conjugating the JNF of a Type B matrix by any invertible matrix of the form 
\[ \begin{pmatrix}
a & b \\
c & d
\end{pmatrix} \]
where $a^2+c^2=0$ yields a symmetric matrix with coinciding eigenvalues. Real symmetric matrices are orthogonally diagonalizable, so the Type B locus for real symmetric matrices is empty. 

The significance of these dimensions is as follows. A generic real symmetric matrix with coinciding eigenvalues is diagonalizable and a generic complex square matrix or complex symmetric matrix with coinciding eigenvalues is not. Real symmetric matrices can be studied through their diagonalizations. 
\end{example}

Matrices with repeated eigenvalues have been studied in contexts from geometry \cite{BKL} to classical invariant theory and linear algebra \cite{ Domokos1,Domokos3,Domokos2,Ily,Lax, Parlett,RoySOS, BerndPoly}. Recently, they have come to focus in the study of curvature of algebraic varieties \cite{BRW}. The \textit{principal curvatures} of a variety are the eigenvalues of the \textit{second fundamental form}. Coincidences of eigenvalues correspond to geometric features; for example, on a surface, a point where the eigenvalues of the second fundamental form coincide is called an \textit{umbilic}. At an umbilic the best second-order approximation of a surface is given  by a sphere. 

A matrix is called \textit{degenerate} if its eigenvalues are not all distinct. The locus of such matrices is a variety defined by the matrix discriminant. As a polynomial in the eigenvalues, the matrix discriminant is the product of the squared differences of each pair of eigenvalues, and thus it is zero exactly when the eigenvalues are not distinct. To study the variety of degenerate matrices, one considers the discriminant as a polynomial in the entries of the matrix. In \cite{Parlett}, Parlett gives an equation for the discriminant of a matrix in its entries by describing it as the determinant of another matrix. 

In a refinement of the study of degenerate matrices, some authors \cite{Domokos3, Domokos2, RoySOS} have studied matrices by their number of distinct eigenvalues. In this situation, the role of the matrix discriminant is played by the sequence of $k$-\textit{subdiscriminants}. The $0$-subdiscriminant is the usual matrix discriminant. An $n \times n$ matrix has exactly $n-k$ distinct eigenvalues if and only if subdiscriminants $0$ through $k-1$ vanish and the $k$-subdiscriminant does not. In \cite{RoySOS}, Roy gives an explicit description of the $k$-subdiscriminant of the characteristic polynomial of a matrix $A$ in terms of the entries of $A$. Furthermore, she expresses the $k$-subdiscriminant as a sum of squares with real coefficients. 

In \cite{Domokos2}, Domokos studies the variety of matrices with a bounded number of distinct eigenvalues from the perspective of invariant theory. This variety can be characterized by its invariance under the action of conjugation by the special orthogonal group on the space of symmetric matrices. He describes the minimal degree homogeneous component of the space of invariants of the variety of matrices with a bounded number of distinct eigenvalues.

The variety of $\lambda$-partitioned eigenvalues appears in \cite{EDDTransfer}, where Bik and Draisma analyze its properties with respect to distance optimization. 
In \cite{Khaz}, Kozhasov studies the open submanifold of the variety of $\lambda$-partitioned eigenvalues where the eigenvalues have exact multiplicities $\lambda$. Kozhasov proves that it is a \textit{minimal submanifold} of the space of real symmetric $n \times n$ matrices. A minimal submanifold of a Riemannian manifold is one with zero mean curvature vector field; this generalizes the concept of surfaces in $\mathbb{R}^3$ that locally minimize area. 

This paper further investigates the variety of $\lambda$-partitioned eigenvalues, with each section addressing a different aspect. In Section \ref{sec:parametrization}, we give a parametrization.  In Section \ref{sec:dimension}, we prove a formula for the dimension.  In Section \ref{sec:equations}, we put the parametrization to work to find equations and the degree of this variety for small $n$. We explain how representation theory can be used to extend these calculations to larger $n$. We also describe the ring of $O(n)$-invariants. In Section \ref{sec:diagonal}, we study the diagonal restriction, computing its degree. We show how the diagonal restriction provides a good model for distance optimization questions regarding the variety of $\lambda$-partitioned eigenvalues, presenting a theorem of Bik and Draisma for its Euclidean distance degree.

 \section{Parametrization}\label{sec:parametrization}

In this section, we describe a parametrization of the
variety of $\lambda$-partitioned eigenvalues $V_{\mathbb{R}}(\lambda) \subset \mathbb{R}^{\frac{n(n+1)}{2}}$ by rational functions. Real symmetric matrices are diagonalizable by orthogonal matrices. The special orthogonal group is parametrized by the set of skew-symmetric matrices.

\begin{proposition}\label{prop:parametrization} Let $\lambda=(\lambda_1, \dots, \lambda_m)$ be a partition of $n$ such that $\lambda \neq (1, \dots, 1)$. Let $Diag(\lambda)$ be a diagonal $n \times n$ matrix with diagonal entries $\mu_1, \dots, \mu_m$ where each entry $\mu_i$ appears with multiplicity $\lambda_i$. Let $B$ be a skew-symmetric $n \times n$ matrix. Let $I$ be the $n \times n$ identity matrix. The map 
\[ p: \mathbb{R}^{n \times n} \to \mathbb{R}^{n \times n} \] 
\[ B \mapsto (I-B)(I+B)^{-1}Diag(\lambda)(I+B)(I-B)^{-1} \] 
is a parametrization of a Zariski open dense subset of $V_{\mathbb{R}}(\lambda)$ by rational functions. 
\end{proposition}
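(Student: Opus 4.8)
The plan is to recognize the rational map $B \mapsto (I-B)(I+B)^{-1}$ as the \emph{Cayley transform}, which carries skew-symmetric matrices into $SO(n)$; once this is done, $p(B)$ is literally a conjugate of $Diag(\lambda)$ by an orthogonal matrix, so membership in $V_{\mathbb{R}}(\lambda)$ becomes an eigenvalue count, and the ``partial'' density part follows from the fact that the Cayley transform has Zariski-dense image in $SO(n)$ together with the orbit description already used to prove Theorem~\ref{thm:dimension}.

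First I would record that $p$ is well defined on the space of skew-symmetric matrices: if $B^{\mathsf T}=-B$, then for $x\neq 0$ one has $x^{\mathsf T}(I\pm B)x=\|x\|^2>0$, so $I+B$ and $I-B$ are invertible. Writing $Q=(I-B)(I+B)^{-1}$ and using that $I\pm B$ and $(I\pm B)^{-1}$ all commute (they are rational expressions in the single matrix $B$), a short computation gives $Q^{\mathsf T}=(I+B)^{-1}(I-B)=(I-B)^{-1}(I+B)=Q^{-1}$, so $Q\in O(n)$; and from $\det(I-B)=\det(I-B)^{\mathsf T}=\det(I+B)$ we get $\det Q=1$, so $Q\in SO(n)$. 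Since the trailing factor $(I+B)(I-B)^{-1}$ of $p(B)$ equals $Q^{-1}=Q^{\mathsf T}$, we have $p(B)=Q\,Diag(\lambda)\,Q^{\mathsf T}$. Thus $p(B)$ is symmetric and similar to $Diag(\lambda)$, hence has eigenvalues $\mu_1,\dots,\mu_m$ with multiplicities $\lambda_1,\dots,\lambda_m$, and in particular $p(B)\in V_{\mathbb{R}}(\lambda)$; this gives $\operatorname{im}(p)\subseteq V_{\mathbb{R}}(\lambda)$.

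It remains to see that the image is large enough to count as a parametrization. For this I would use that the Cayley transform is birational onto its image, with inverse $Q\mapsto (I-Q)(I+Q)^{-1}$, so its image is the Zariski-dense open subset $\{Q\in SO(n):\det(I+Q)\neq 0\}$ of $SO(n)$. Since $\phi:SO(n)\to\mathbb{R}^{n(n+1)/2}$, $Q\mapsto Q\,Diag(\lambda)\,Q^{\mathsf T}$, is a morphism of real varieties, $\operatorname{im}(p)$ is the image under $\phi$ of a Zariski-dense subset of $SO(n)$ and is therefore dense in $\phi(SO(n))$; and $\phi(SO(n))$ is the whole $O(n)$-orbit of $Diag(\lambda)$, because the stabilizer of $Diag(\lambda)$ in $O(n)$ contains the determinant $-1$ matrix $\operatorname{diag}(-1,1,\dots,1)$, so every $O(n)$-conjugate of $Diag(\lambda)$ is also an $SO(n)$-conjugate. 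By the proof of Theorem~\ref{thm:dimension}, this orbit is an irreducible subvariety of $V_{\mathbb{R}}(\lambda)$ of dimension $\binom{n}{2}-\sum_{i}\binom{\lambda_i}{2}$; letting the distinct scalars $\mu_1,\dots,\mu_m$ vary then sweeps out the remaining $m$ directions and yields a Zariski-dense subset of $V_{\mathbb{R}}(\lambda)$, which is the sense in which $p$ (for each fixed $Diag(\lambda)$) is a \emph{partial} parametrization.

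I expect the routine part to be the Cayley-transform algebra and the eigenvalue count, and the only genuinely delicate points to be in the last paragraph: interpreting ``partial'' correctly (the image of $p$ alone cannot be dense in $V_{\mathbb{R}}(\lambda)$ since $p$ fixes the eigenvalues), transferring density through the morphism $\phi$, and checking that the $SO(n)$-orbit of $Diag(\lambda)$ coincides with its $O(n)$-orbit.
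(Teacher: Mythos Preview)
Your proof is correct and follows the same approach as the paper: identify $B\mapsto (I-B)(I+B)^{-1}$ as the Cayley transform landing in $SO(n)$, so that $p(B)$ is an orthogonal conjugate of $Diag(\lambda)$ and hence lies in $V_{\mathbb{R}}(\lambda)$ by the spectral theorem. Your version is in fact considerably more detailed than the paper's own proof, which simply names the Cayley transform and invokes the diagonalization $A=PDP^{-1}$ without verifying invertibility, orthogonality, or the density claims you carefully supply.
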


\begin{proof}
Consider the \text{Cayley transform} map from the space $Skew(n)$ of real skew-symmetric $n \times n$ matrices to the orthogonal group $O(n)$ of real $n \times n$ matrices:
\[ \Cay: Skew(n) \to O(n) \]
\[   B \mapsto (I-B)(I+B)^{-1}  \] 
where $I$ is the $n \times n$ identity matrix. Its  image is the set $SO(n)$ of special orthogonal matrices minus those with $-1$ as an eigenvalue. See \cite{AC} for details. It is well known that real symmetric matrices are diagonalizable by orthogonal matrices. We show that real symmetric matrices with a repeated eigenvalue are diagonalizable by special orthogonal matrices. Let $A=PDP^{T}$ where $P$ is an orthogonal matrix, $D$ is a diagonal matrix, and $A$ has a repeated eigenvalue. Then we can choose $i \neq j$ such that $D_{(i,i)}=D_{(j,j)}$. Let $P_{(ij)}$ denote the matrix obtained by swapping row $i$ and row $j$ of $P$. If $P$ has determinant $-1$, then $P_{(ij)}$ has determinant $1$, so $A=P_{(ij)}DP^T_{(ij)}$ is a diagonalization of $A$ by a special orthogonal matrix. The proposition follows.
\end{proof}

This parametrization will be used in the next section to find a formula for the dimension of $V_{\mathbb{R}}(\lambda)$ and in the numerical computations of Section \ref{sec:equations}.

\section{Dimension}\label{sec:dimension}

The dimension is an important invariant of any algebraic variety. We now give a formula for the dimension of the variety of $\lambda$-partitioned eigenvalues and show how it can be proved using the fact that this variety is the $O(n)$-orbit of a certain form of diagonal matrix. To use the algebraic definition of dimension, we study the complexification of $V_{\mathbb{R}}(\lambda)$, denoted $V_{\mathbb{C}}(\lambda)$, which is characterized by its parametrization. 

 \begin{theorem}\label{thm:dimension}
 The complexification $V_{\mathbb{C}}(\lambda)$ of the real algebraic variety $V_{\mathbb{R}}(\lambda) \subset \mathbb{R}^{\frac{n(n+1)}{2}}$ of $n \times n$ real symmetric matrices with eigenvalue multiplicities corresponding to the partition $\lambda= (\lambda_1, \dots, \lambda_m)$ of $n$ or partitions coarser than $\lambda$ is an irreducible variety of dimension $m+ \binom{n}{2}-\sum_{i=1}^m \binom{\lambda_i}{2}$.
 \end{theorem}

 \begin{proof} Let $\lambda=(1, \dots, 1)$. Then $V_{\mathbb{C}}(\lambda)=\mathbb{C}^\frac{n(n+1)}{2}$, so the theorem holds. Now suppose $\lambda \neq (1, \dots, 1)$.
 Every real symmetric matrix $A$ can be written in the form $A=PDP^{-1}$ where $P$ is a real orthogonal matrix and $D$ is a diagonal matrix with diagonal entries equal to the eigenvalues of $A$. For the remainder of the proof, we focus on the complexification  $V_{\mathbb{C}}(\lambda)$ of
 $V_{\mathbb{R}}(\lambda)$, which consists of matrices of the form $A=PDP^{-1}$ where $P$ is an orthogonal matrix with complex entries and $D$ is a diagonal matrix with entries partitioned by $\lambda$ or a partition coarser than $\lambda$. As argued in the proof of Proposition \ref{prop:parametrization}, when $A$ has a repeated eigenvalue, it in fact suffices to let $P$ be a special orthogonal matrix. Thus this parametrization gives $V_{\mathbb{C}}(\lambda)$ as the image of the irreducible variety of special orthogonal matrices with complex entries under a regular map, so  $V_{\mathbb{C}}(\lambda)$  is irreducible. This parametrization also enables us to prove a formula for the dimension of $V_{\mathbb{C}}(\lambda)$. 
Suppose there are $m$ distinct eigenvalues. 
 Arrange the matrix $D$ so that repeated eigenvalues are grouped together along the diagonal. Scalar matrices commute with all matrices, so the scalar matrix block corresponding to each eigenvalue commutes with the corresponding blocks of $P$ and $P^{-1}$. Thus any matrix $P$ with orthogonal blocks for each eigenvalue stabilizes $D$. The dimension of the orthogonal group $O(n)$ is $\binom{n}{2}$. So the dimension of the block orthogonal stabilizer of $D$ is $\sum_{i=1}^m \binom{\lambda_i}{2}$. By Proposition 21.4.3 of \cite{TauvelYu}, the dimension of the orbit of a fixed diagonal matrix is $\binom{n}{2}-\sum_{i=1}^m \binom{\lambda_i}{2}$. Since there are $m$ choices of eigenvalues, the dimension of the set of matrices with multiplicities corresponding to $\lambda$ is $m+\binom{n}{2}-\sum_{i=1}^m \binom{\lambda_i}{2}$. The subset of matrices with eigenvalue multiplicities corresponding to partitions coarser than $\lambda$ is of smaller dimension because there are fewer choices of eigenvalues. Thus the dimension of the variety of matrices with eigenvalue multiplicities corresponding to the partition $\lambda$ or partitions coarser than $\lambda$ is as stated. 
 \end{proof} 
 
 Having a formula for the dimension of this variety will help us to find its equations in Section \ref{sec:equations}.
 
\section{Equations}\label{sec:equations}

In this section, we discuss how to find equations for the varieties 
$V_{\mathbb{R}}(\lambda)$. The case $n=2$ was discussed in the introduction. For $n=3$ and most partitions of $n=4$, we use the parametrization from Section \ref{sec:parametrization} to generate points on the variety and then use interpolation to find polynomials that vanish on these points. For larger $n$, the matrices used for interpolation become too large for both our symbolic and numerical methods. We discuss how representation theory can be used to make these computations more feasible. As a first step towards studying the relevant representations, we describe the ring of invariants. 

In Examples \ref{ex1}, \ref{ex2} and \ref{ex3}, we analyze the varieties $V_{\mathbb{R}}(\lambda)$ where $\lambda$ is a partition of $n=3$ or $n=4$. By Theorem \ref{thm:dimension}, we know the codimension of each variety. We generate points and use interpolation to find equations on those points. We use Macaulay2 \cite{M2} to verify that these equations generate a prime ideal of the expected codimension. This confirms that we have indeed found enough equations to generate the desired ideal. 

For $n=4$ and $\lambda=(2,1,1)$, our interpolation code found no polynomials of degree less than or equal to $5$. As this partition is just the case of degenerate $4 \times 4$ matrices, it has been studied by other authors. The ideal is of codimension $2$, generated by the (unsquared) summands in a sum of squares representation of the matrix discriminant. Parlett provides an algorithm using determinants for computing this discriminant and writing it as a sum of many squares \cite{Parlett}. Domokos gives a nonconstructive proof that it can be written as a sum of 7 squares \cite{Domokos1}. 

\begin{example}\label{ex1}
Let $n=3$ and $\lambda=(2,1)$. We confirm the findings of other authors that this ideal is of codimension $2$ and degree $4$ \cite{BerndPoly}. It is generated by the following 7 cubic polynomials. The matrix discriminant is the sum of the squares of these polynomials. 
\begin{tiny} $$ \begin{matrix}
 -x_{11}x_{13}x_{22}+x_{11}x_{13}x_{33}+x_{12}^2x_{13}-x_{12}x_{22}x_{23}+x_{12}x_{23}x_{33}-x_{13}^3+x_{13}x_{22}^2-x_{13}x_{22}x_{33} \\ 
 -x_{12}^2x_{23}+x_{12}x_{13}x_{22}-x_{12}x_{13}x_{33}+x_{13}^2x_{23} \\
-x_{11}x_{13}x_{23}+x_{12}x_{13}^2-x_{12}x_{23}^2+x_{13}x_{22}x_{23}\\
x_{11}x_{12}x_{23}-x_{11}x_{13}x_{22}+x_{11}x_{13}x_{33}-x_{12}x_{22}x_{23}-x_{13}^3+x_{13}x_{22}^2-x_{13}x_{22}x_{33}+x_{13}x_{23}^2\\
-x_{11}x_{12}x_{22}+x_{11}x_{12}x_{33}-x_{11}x_{13}x_{23}+x_{12}^3+x_{12}x_{22}x_{33}-x_{12}x_{23}^2-x_{12}x_{33}^2+x_{13}x_{23}x_{33} \\
-x_{11}^2x_{23}+x_{11}x_{12}x_{13}+x_{11}x_{22}x_{23}+x_{11}x_{23}x_{33}-x_{12}^2x_{23}-x_{12}x_{13}x_{33}-x_{22}x_{23}
x_{33}+x_{23}^3\\
x_{11}^2x_{22}-x_{11}^2x_{33}-x_{11}x_{12}^2+x_{11}x_{13}^2-x_{11}x_{22}^2+x_{11}x_{33}^2 +x_{12}^2x_{22}-x_{13}^2x_{33}   +x_{22}^2x_{33}-x_{22}x_{23}^2-x_{22}x_{33}^2+x_{23}^2x_{33}
\end{matrix}
$$ \end{tiny}
$\!\!\!\!$
\end{example}

\begin{example}\label{ex2}
Let $n=4$ and $\lambda=(3,1)$. The ideal is of codimension $5$ and degree $8$. It is generated by the following 10 quadrics: 

\begin{tiny} $$ \begin{matrix}
-x_{12}x_{34}+x_{13}x_{24} \\
-x_{12}x_{24}+x_{13}x_{34}+x_{14}x_{22}-x_{14}x_{33} \\
-x_{12}x_{34}+x_{14}x_{23} \\
x_{12}x_{33}-x_{12}x_{44}-x_{13}x_{23}+x_{14}x_{24} \\
-x_{12}x_{23}+x_{13}x_{22}-x_{13}x_{44}+x_{14}x_{34} \\
x_{11}x_{34}-x_{13}x_{14}-x_{22}x_{34}+x_{23}x_{24} \\
x_{11}x_{24}-x_{12}x_{14}+x_{23}x_{34}-x_{24}x_{33} \\
-x_{11}x_{33}+x_{11}x_{44}+x_{13}^2-x_{14}^2+x_{22}x_{33}-x_{22}x_{44}-x_{23}^2+x_{24}^2 \\
x_{11}x_{23}-x_{12}x_{13}-x_{23}x_{44}+x_{24}x_{34} \\
-x_{11}x_{22}+x_{11}x_{44}+x_{12}^2-x_{14}^2+x_{22}x_{33}-x_{23}^2-x_{33}x_{44}+x_{34}^2
\end{matrix}
$$ \end{tiny}
$\!\!\!\!$
\end{example}

\begin{example}\label{ex3}
Let $n=4$ and $\lambda=(2,2)$. The ideal is of codimension $4$ and degree $6$. It is generated by the following 9 quadrics: 
\begin{tiny} $$ \begin{matrix}
x_{11}^2+4x_{13}^2-x_{22}^2-4x_{24}^2-2x_{11}x_{33}+x_{33}^2+2x_{22}x_{44}-x_{44}^2
\\
x_{11}x_{12}+x_{12}x_{22}+2x_{13}x_{23}+2x_{14}x_{24}-x_{12}x_{33}-x_{12}x_{44} \\
 x_{11}x_{14}-x_{14}x_{22}+2x_{12}x_{24}-x_{14}x_{33}+2x_{13}x_{34}+x_{14}x_{44}
 \\
 x_{11}x_{13}-x_{13}x_{22}+2x_{12}x_{23}+x_{13}x_{33}+2x_{14}x_{34}-x_{13}x_{44} \\

-x_{11}^2-4x_{14}^2+x_{22}^2+4x_{23}^2-2x_{22}x_{33}+x_{33}^2+2x_{11}x_{44}-x_{44}^2
\\
 2x_{12}x_{14}-x_{11}x_{24}+x_{22}x_{24}-x_{24}x_{33}+2x_{23}x_{34}+x_{24}x_{44} \\
 2x_{12}x_{13}-x_{11}x_{23}+x_{22}x_{23}+x_{23}x_{33}+2x_{24}x_{34}-x_{23}x_{44} \\
  -x_{11}^2-4x_{12}^2+2x_{11}x_{22}-x_{22}^2+x_{33}^2+4x_{34}^2-2x_{33}x_{44}+x_{44}^2 \\
-x_{11}x_{34}+2x_{13}x_{14}-x_{22}x_{34}+2x_{23}x_{24}+x_{33}x_{34}+x_{34}x_{44}
\end{matrix}
$$ \end{tiny}
$\!\!\!\!$

\end{example}

Naive interpolation strategies, both symbolic and numerical, become infeasible as $n$ increases. To reduce the dimensions of the matrices involved and make these linear algebra computations more feasible, we turn to representation theory. See \cite{FH} and \cite[Chapter 10]{MSbook} for details. 

The ideal $I(V_{\mathbb{R}}(\lambda))$ is
stable under the action by conjugation of the real orthogonal group $O(n)$ on the space $\mathbb{R}^{\frac{n(n+1)}{2}}$ of real symmetric $n \times n$-matrices. 
Thus the degree $d$ homogeneous component $I(V_{\mathbb{R}}(\lambda))_d$ is a representation of $O(n)$. 
So to find generators of $I(V_{\mathbb{R}}(\lambda))$, we find representations of $O(n)$. Since $O(n)$ is reductive, every representation has an \textit{isotypic decomposition} into irreducible representations. While these irreducible representations may be of high dimension, they can be studied through their low-dimensional \textit{highest weight spaces}. We refer the reader to \cite[Chapter 26]{FH} for details.  

Here we examine the special case of one-dimensional representations of $O(n)$. These vector spaces contain polynomials that are themselves invariant under the action of $O(n)$, rather than simply generating an ideal which is $O(n)$-stable as an ideal. 

Denote by $I(V_{\mathbb{R}}(\lambda))^{O(n)}$ the graded vector space of $O(n)$-invariant polynomials in $I(V_{\mathbb{R}}(\lambda))$. 
Let $V_{\mathbb{R}}(D_{\lambda})$ denote the intersection of $V_{\mathbb{R}}(\lambda)$ with the variety of diagonal matrices in $\mathbb{R}^{\frac{n(n+1)}{2}}$. We often identify $V_{\mathbb{R}}(D_{\lambda})$ with a variety in $\mathbb{R}^n$. The symmetric group $S_n \subset O(n)$, consisting of the permutation matrices, acts on $V_{\mathbb{R}}(D_{\lambda})$ by permuting the diagonal entries.
Let $I(V_{\mathbb{R}}(D_{\lambda}) )^{S_n}$ be the graded vector space of $S_n$-invariant polynomials in $I(V_{\mathbb{R}}(D_{\lambda}))$. 

\begin{theorem}
$I(V_{\mathbb{R}}(\lambda))^{O(n)}$ and $I(V_{\mathbb{R}}(D_{\lambda}))^{S_n}$ are isomorphic as graded vector spaces. 
\end{theorem}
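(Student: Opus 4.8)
The plan is to deduce this isomorphism from the classical Chevalley restriction theorem for the conjugation action of the orthogonal group on real symmetric matrices. Write $\mathrm{Sym}(n)=\mathbb{R}^{n(n+1)/2}$ and let $\mathfrak{d}\cong\mathbb{R}^n$ be the subspace of diagonal matrices. Restriction of polynomials gives a graded ring homomorphism $\rho\colon \mathbb{R}[\mathrm{Sym}(n)]\to\mathbb{R}[\mathfrak{d}]=\mathbb{R}[x_1,\dots,x_n]$. The Chevalley restriction theorem says that $\rho$ carries $\mathbb{R}[\mathrm{Sym}(n)]^{O(n)}$ isomorphically onto $\mathbb{R}[x_1,\dots,x_n]^{S_n}$, the relevant Weyl group being $S_n$ acting by permuting coordinates; concretely, $\rho$ sends the power traces $A\mapsto\mathrm{tr}(A^k)$ to the power sums, which generate the symmetric polynomials. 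Moreover $O(n)$ and $SO(n)$ have the same invariants here: if $A=QDQ^{\mathsf T}$ with $Q\in O(n)$ and $D$ diagonal, then replacing $Q$ by $Q\,\mathrm{diag}(-1,1,\dots,1)\in SO(n)$ leaves $A$ unchanged, so the $SO(n)$-orbits on $\mathrm{Sym}(n)$ coincide with the $O(n)$-orbits. Hence $\rho$ restricts to a graded isomorphism $\mathbb{R}[\mathrm{Sym}(n)]^{SO(n)}\xrightarrow{\ \sim\ }\mathbb{R}[x_1,\dots,x_n]^{S_n}$, and it remains to show that under it the subspace $I(V_{\mathbb{R}}(\lambda))^{SO(n)}$ corresponds exactly to $I(V_{\mathbb{R}}(D_\lambda))^{S_n}$.

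First I would check that $\rho$ maps $I(V_{\mathbb{R}}(\lambda))^{SO(n)}$ into $I(V_{\mathbb{R}}(D_\lambda))^{S_n}$ and is injective on it. If $f\in I(V_{\mathbb{R}}(\lambda))^{SO(n)}$, then $\rho(f)$ is $S_n$-invariant, and it vanishes on $V_{\mathbb{R}}(D_\lambda)$ because, by definition, $V_{\mathbb{R}}(D_\lambda)=V_{\mathbb{R}}(\lambda)\cap\mathfrak{d}\subseteq V_{\mathbb{R}}(\lambda)$ and $f$ vanishes on $V_{\mathbb{R}}(\lambda)$; so $\rho(f)\in I(V_{\mathbb{R}}(D_\lambda))^{S_n}$. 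Injectivity is immediate from the spectral theorem: every real symmetric matrix is $O(n)$-conjugate (hence, by the orbit remark, $SO(n)$-conjugate) to a diagonal one, so an $SO(n)$-invariant polynomial vanishing on all diagonal matrices vanishes identically; thus $\rho$ is injective on $\mathbb{R}[\mathrm{Sym}(n)]^{SO(n)}$, a fortiori on $I(V_{\mathbb{R}}(\lambda))^{SO(n)}$.

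The substance is surjectivity of $\rho\colon I(V_{\mathbb{R}}(\lambda))^{SO(n)}\to I(V_{\mathbb{R}}(D_\lambda))^{S_n}$. Given $g\in I(V_{\mathbb{R}}(D_\lambda))^{S_n}$, Chevalley restriction provides a unique $f\in\mathbb{R}[\mathrm{Sym}(n)]^{SO(n)}$ with $\rho(f)=g$, and I must show $f\in I(V_{\mathbb{R}}(\lambda))$. Here I would invoke the description of $V_{\mathbb{R}}(\lambda)$ used in the proof of Theorem \ref{thm:dimension}: it is the Zariski closure of the set of matrices $Q\,\mathrm{Diag}(\lambda)\,Q^{\mathsf T}$ with $Q\in O(n)$ and $\mathrm{Diag}(\lambda)$ ranging over diagonal matrices whose eigenvalue multiplicity pattern is exactly $\lambda$. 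For such a matrix, $SO(n)$-invariance together with the orbit remark gives $f(Q\,\mathrm{Diag}(\lambda)\,Q^{\mathsf T})=f(\mathrm{Diag}(\lambda))=g(\mu_1,\dots,\mu_1,\mu_2,\dots,\mu_m)$, which is $0$ because the vector of diagonal entries lies in the locus of multiplicity pattern $\lambda$, hence in $V_{\mathbb{R}}(D_\lambda)$, where $g$ vanishes. So $f$ vanishes on a Zariski-dense subset of $V_{\mathbb{R}}(\lambda)$ and therefore on all of it, giving $f\in I(V_{\mathbb{R}}(\lambda))^{SO(n)}$ with $\rho(f)=g$. Combining the two steps, $\rho$ restricts to a graded vector space isomorphism $I(V_{\mathbb{R}}(\lambda))^{SO(n)}\cong I(V_{\mathbb{R}}(D_\lambda))^{S_n}$.

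The main obstacle is making the surjectivity step airtight: one needs the precise orbit description of $V_{\mathbb{R}}(\lambda)$, so that vanishing on diagonal matrices of pattern $\lambda$ propagates to the whole variety by Zariski density, together with the identification of $SO(n)$-invariants with $O(n)$-invariants so that the Chevalley restriction theorem applies verbatim over $\mathbb{R}$. Everything else --- the statement of Chevalley restriction, injectivity, and $S_n$-invariance of restricted invariants --- is routine.
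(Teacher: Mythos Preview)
Your proof is correct and follows the same strategy as the paper: invoke Chevalley restriction to obtain the graded isomorphism $\mathbb{R}[\mathrm{Sym}(n)]^{SO(n)}\cong\mathbb{R}[x_1,\dots,x_n]^{S_n}$, and then argue that it carries $I(V_{\mathbb{R}}(\lambda))^{SO(n)}$ onto $I(V_{\mathbb{R}}(D_\lambda))^{S_n}$. In fact your version is more complete than the paper's, since you explicitly verify the injectivity and surjectivity on the ideals (via the spectral theorem and the orbit description of $V_{\mathbb{R}}(\lambda)$) and justify why $SO(n)$- and $O(n)$-invariants agree on $\mathrm{Sym}(n)$, points the paper leaves implicit.
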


\begin{proof} Let $\text{Sym}_n$ be the space of real symmetric $n \times n$ matrices and $\text{Diag}_n$ the subspace of diagonal matrices. Consider the degree-preserving linear map $\phi: \mathbb{R}[\text{Sym}_n]^{O(n)} \to \mathbb{R}[\text{Diag}_n]^{S_n}$ given by restriction of functions. Suppose $\phi(f)=\phi(g)$, so that $f$ and $g$ agree for all diagonal matrices. Let $A$ be a real symmetric $n \times n$ matrix. Then $A$ is orthogonally diagonalizable; that is, $A=PDP^{-1}$ for some orthogonal matrix $P$ and diagonal matrix $D$. Since $f$ and $g$ are $O(n)$-invariant, we have $f(A)=f(D)$ and $g(A)=g(D)$. Since $f$ and $g$ agree on diagonal matrices, we have $f(D)=g(D)$, so $\phi$ is injective. We note that $\mathbb{R}[\text{Diag}_n]^{S_n}$ is the ring of symmetric polynomials. It is generated by the coefficients of the characteristic polynomial, which are $O(n)$-invariant polynomials in the matrix entries, and thus in the image of $\phi$. So $\phi$ is surjective and thus an isomorphism of graded vector spaces $\mathbb{R}[\text{Sym}_n]^{O(n)} \cong \mathbb{R}[\text{Diag}_n]^{S_n}$. It induces an isomorphism $I(V_{\mathbb{R}}(\lambda))^{O(n)} \cong I(V_{\mathbb{R}}(D_{\lambda}))^{S_n}$.
\end{proof}

This result is beneficial because $I(V_{\mathbb{R}}(D_{\lambda}))^{S_n}$ is easier to study than $I(V_{\mathbb{R}}(\lambda))^{O(n)}$. We thus turn our study to $I(V_{\mathbb{R}}(D_{\lambda}))$.

\section{Diagonal Matrices}\label{sec:diagonal}

In this section, we study the intersection of the variety of $\lambda$-partitioned eigenvalues with the variety of diagonal matrices. Recall that $V_{\mathbb{R}}(D_{\lambda})$ denotes the intersection of $V_{\mathbb{R}}(\lambda)$ with the variety of diagonal matrices in $\mathbb{R}^{\frac{n(n+1)}{2}}$. We identify this with the variety in $\mathbb{R}^n$ of points with coordinates that have multiplicities given by the partition $\lambda$. 
Then $V_{\mathbb{R}}(D_{\lambda})$ is a union of the  $\frac{n!}{\lambda_1!\cdots \lambda_m!}$ subspaces of dimension $m$ given by permuting the coordinates of the subspace 
\[ V_1= \{ (a_1, \ldots, a_1, a_2, \ldots, a_2, \ldots, a_m, \ldots,  a_m) \mid a_i \in \mathbb{R} \} \] 
where the coordinate $a_i$ is repeated $\lambda_i$ times. Characterizing $V_{\mathbb{R}}(D_{\lambda})$ as a union of linear spaces reveals its degree. 

\begin{proposition} Let $\lambda=(\lambda_1, \dots, \lambda_m)$ be a partition of $n$. 
The degree of the variety $V_{\mathbb{R}}(D_{\lambda})$ of $n \times n$ diagonal matrices with eigenvalue multiplicities partitioned according to $\lambda$ is 
\[ \frac{n!}{\lambda_1! \cdots \lambda_m!} \] 
\end{proposition}

One may ask how well the diagonal restriction of the variety of $\lambda$-partitioned eigenvalues models the variety as a whole. With regards to distance optimization, the diagonal restriction is a quite good model. Let $X \subset \mathbb{R}^n$ be a real algebraic variety and $X_{\mathbb{C}} \subset \mathbb{C}^n$ its complexification. Fix $u \in \mathbb{R}^n$. Then the \textit {Euclidean distance degree (EDD)} of $X$ is the number of complex critical points of the squared distance function $d_u(x)= \sum_{i=1}^n (u_i-x_i)^2$ on the smooth locus of $X_{\mathbb{C}}$ \cite{EDD}. It can be shown that this number is constant on a dense open subset of data $u \in \mathbb{R}^n$. In \cite{EDDTransfer}, Bik and Draisma prove that the variety of $\lambda$-partitioned eigenvalues and its diagonal restriction have the same EDD. The diagonal restriction is a subspace arrangement, so its EDD is its number of distinct maximal subspaces. 

\begin{theorem}[Bik and Draisma]
Let $\lambda=(\lambda_1, \dots, \lambda_m)$.
The Euclidean distance degree of the variety $V_{\mathbb{R}}(\lambda)$ of $\lambda$-partitioned eigenvalues is 
$\frac{n!}{\lambda_1!\cdots \lambda_m!}$. 
\end{theorem}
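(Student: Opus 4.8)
The plan is to reduce the statement about the full variety $V_{\mathbb{R}}(\lambda)$ of $\lambda$-partitioned symmetric matrices to the corresponding statement about its diagonal restriction $V_{\mathbb{R}}(D_\lambda)$, and then compute the EDD of the latter directly as a subspace arrangement. The reduction is exactly the content of the theorem of Bik and Draisma on EDD transfer cited as \cite{EDDTransfer}: since $V_{\mathbb{R}}(\lambda)$ is the $O(n)$-orbit of $\mathrm{Diag}(\lambda)$, it is invariant under conjugation by $O(n)$, and the squared-distance function from a generic data point $u$ is likewise invariant under the $O(n)$-action after we diagonalize $u$; one can match up the complex critical points on the smooth locus of $V_{\mathbb{R}}(\lambda)_{\mathbb{C}}$ with those on the smooth locus of $V_{\mathbb{R}}(D_\lambda)_{\mathbb{C}}$. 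I would invoke this as a black box, quoting it in the form stated in the paragraph preceding the theorem: the EDD of $V_{\mathbb{R}}(\lambda)$ equals the EDD of $V_{\mathbb{R}}(D_\lambda)$.

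Next I would compute the EDD of the diagonal restriction. By the discussion in Section \ref{sec:diagonal}, $V_{\mathbb{R}}(D_\lambda)$ is a transversal arrangement of $\frac{n!}{\lambda_1!\cdots\lambda_m!}$ linear subspaces of $\mathbb{R}^n$, each of dimension $m$, obtained by permuting coordinates of $V_1$. For an affine-linear (indeed linear) subspace $L$, the squared distance $d_u$ restricted to $L_{\mathbb{C}}$ is a strictly convex quadratic with a single critical point, the orthogonal projection of $u$ onto $L_{\mathbb{C}}$; since the subspaces meet only along strictly smaller subspaces, for generic $u$ none of these projections lands in an intersection, so each projection is a smooth point of the arrangement and a genuine critical point of $d_u$ on the smooth locus, and there are no others. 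Hence the number of complex critical points on the smooth locus equals the number of maximal (hence distinct) subspaces, which is $\frac{n!}{\lambda_1!\cdots\lambda_m!}$. Combining with the transfer theorem gives the claim.

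**The main obstacle** is making rigorous the claim that the EDD of the orbit variety equals that of its diagonal slice — i.e., the transfer statement itself. The subtlety is that $V_{\mathbb{R}}(\lambda)$ is singular exactly along the loci where eigenvalues collide further (the coarser partitions), and one must verify that the generic data point's critical points avoid these singular strata and are in natural bijection with the critical points on the smooth locus of the slice; this requires the genericity argument and the group-action correspondence that Bik and Draisma supply. Since the excerpt explicitly grants us their result, in this paper the obstacle is discharged by citation, and the only work that remains is the elementary subspace-arrangement count above, plus the remark (already noted in the paper) that the number of maximal subspaces of $V_{\mathbb{R}}(D_\lambda)$ is $\frac{n!}{\lambda_1!\cdots\lambda_m!}$ because the permutation action of $S_n$ on $V_1$ has stabilizer $S_{\lambda_1}\times\cdots\times S_{\lambda_m}$, giving precisely that many distinct images.
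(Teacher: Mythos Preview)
Your proposal is correct and matches the paper's own treatment exactly: the paper does not supply an independent proof but, in the paragraph immediately preceding the theorem, cites Bik and Draisma's transfer result that $\mathrm{EDD}(V_{\mathbb{R}}(\lambda))=\mathrm{EDD}(V_{\mathbb{R}}(D_\lambda))$ and then observes that the EDD of a subspace arrangement is its number of distinct maximal subspaces, which earlier in Section~\ref{sec:diagonal} was computed to be $\frac{n!}{\lambda_1!\cdots\lambda_m!}$. Your write-up is a faithful expansion of precisely this outline, including the orbit--stabilizer count and the explicit verification that each linear component contributes exactly one critical point for generic $u$.
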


\section*{Acknowledgements}

We thank Yulia Alexandr, Juliette Bruce, M\'{a}ty\'{a}s Domokos, Fulvio Gesmundo, Mateusz Micha{\l}ek and Bernd Sturmfels for helpful discussions. We also thank the anonymous referee for many helpful suggestions that improved the paper. This material is based upon work supported by the National Science Foundation Graduate Research Fellowship Program under Grant No.~DGE 1752814. Any opinions, findings, and
conclusions or recommendations expressed in this material are those of the authors and do not necessarily reflect the views of the National Science Foundation.

\newpage

\end{document}